\title{Amenable actions, invariant means and bounded cohomology}
\DeclareMathOperator{\supp}{Supp}
\def\note#1{{ \color{red}#1}}
\theoremstyle{plain}
\newtheorem{thm}{Theorem}
\newtheorem*{mainthm}{Theorem A}
\newtheorem*{mainthmtwo}{Theorem B}
\newtheorem*{cor}{Corollary}
\newtheorem{lemma}[thm]{Lemma}
\newtheorem{defn}[thm]{Definition}
\theoremstyle{definition}
\newtheorem{example}[thm]{Example}
\newtheorem{remark}[thm]{Remark}
\newcommand{\eps}{\varepsilon}
\newcommand{\norm}[1]{{\|{#1}\|}}
\newcommand{\ra}{\rightarrow}
\newcommand{\R}{\mathbb{R}}
\newcommand{\N}{\mathbb{N}}
\newcommand{\woo}{W_{00}}
\newcommand{\wo}{W_{0}}
\newcommand{\no}{N_0}
\newcommand{\noo}{N_{00}}
\newcommand{\cale}{\mathcal{E}}
\newcommand{\xr}{\xrightarrow}
\newcommand{\cx}{C(X)}
\newcommand{\gcx}{\text{$G$-$C(X)$}}
\let\bullet =\null
\let\note=\null
\thanks{JB, GN and NW were partially supported by EPSRC grant  EP/F031947/1. 
PN was partially supported by NSF grant DMS-0900874}
\author{Jacek Brodzki}
\address{School of Mathematics, University of Southampton, Highfield, Southampton, SO17 1SH, England}
\email{J.Brodzki@soton.ac.uk}
\author{Graham A. Niblo}
\address{School of Mathematics, University of Southampton, Highfield, Southampton, SO17 1SH, England}
\email{G.A.Niblo@soton.ac.uk}
\author{Piotr W. Nowak}
\address{Department of Mathematics, Texas A\& M University, College Station, TX 77840}
\email{pnowak@math.tamu.edu}
\author{Nick Wright}
\address{School of Mathematics, University of Southampton, Highfield, Southampton, SO17 1SH, England}
\email{N.J.Wright@soton.ac.uk}
\begin{document}
\begin{abstract}
We show that  topological amenability of an action of a countable discrete group  on a compact space
is equivalent to  the existence of an invariant 
mean for the action. We prove also  that this is equivalent to vanishing of
bounded cohomology for a class of Banach G-modules associated to the action, as well as to vanishing of a specific cohomology class. 
In the case when the compact space is a point our result reduces to a classic theorem 
of B.E.~Johnson characterising amenability of groups. In the case when the compact
space is the Stone-\v{C}ech compactification of the group we obtain a cohomological characterisation 
of exactness for the group, answering a question of Higson.
\end{abstract}

\maketitle

\section{Introduction}


An invariant mean on a countable discrete group $G$ is a positive linear functional on $\ell^\infty(G)$ which is normalised by the requirement that it pairs with the constant function 1 to give 1, and 
which is fixed by the natural action of $G$ on the space $\ell^\infty(G)^*$.
A group is said to be amenable if it admits an invariant mean. The notion of an amenable action of a group on a topological space, studied by Anantharaman-Delaroche and Renault \cite{AR}, generalises the concept of amenability, and arises naturally in many areas of mathematics. For example, a group acts amenably on a point if and only if it is amenable, while every hyperbolic group acts amenably on its Gromov boundary. 

In this paper we introduce the notion of an invariant mean  for a topological action and prove that the existence of such a mean characterises amenability of the action. 
Moreover, we use the existence of the mean to prove vanishing of bounded cohomology of $G$ with coefficients in a suitable class of Banach $G$ modules, and conversely we prove that 
vanishing of these cohomology groups characterises amenability of the action. This generalises the 
results of 
Johnson \cite{Johnson} on bounded cohomology for amenable groups.


Another generalisation of amenability, this time for metric spaces, was given by Yu \cite{Yu} with the definition of property A. Higson and Roe \cite{HR}  proved a remarkable result that unifies the two approaches: A finitely generated discrete  group $G$ (regarded as a metric space) has Yu's property A if and only if the action of $G$ on its Stone-\v{C}ech compactification $\beta G$ is topologically amenable, and this is true if and only if $G$ acts amenably on any compact space. Ozawa  proved \cite{ozawa} that such groups are exact, and indeed property A and exactness are equivalent for countable discrete groups equipped with a proper left-invariant metric.



To generalise the concept of invariant mean to the context of a topological action, we introduce a Banach $G$-module $W_0(G,X)$ which is an analogue of $\ell^1(G)$, encoding both the group and the space on which it acts. Taking the dual and double dual of this space we obtain analogues of $\ell^\infty(G)$ and $\ell^\infty(G)^*$.  
A mean for the action is an element $\mu\in \wo(G,X)^{**}$ satisfying the normalisation condition $\mu(\pi)=1$, where the element $\pi$   is a summation operator, corresponding to the pairing of $\ell^1(G)$ with the constant function 1 in $\ell^\infty(G)$. 
A mean $\mu$ is said to be invariant if $\mu(g\cdot \varphi)=\mu(\varphi)$
for  every $\varphi\in W_{0}(G,X)^*$, (Definition \ref{mean}).

With these
notions in place we give the following very natural characterisation of amenable actions.

\begin{mainthm}
Let $G$ be a countable discrete group acting by homeomorphisms on a compact Hausdorff topological space $X$. The action is amenable
if and only if there exists an invariant mean for the action.
\end{mainthm}

We then turn to the question of a cohomological characterisation of amenable actions. 
Given an action of a countable discrete group $G$ on a compact space $X$ by homeomorphisms we
introduce a submodule $\no(G,X)$ of $\wo(G,X)$ associated to the action and which is analogous to the submodule $\ell^1_0(G)$ of $\ell^1(G)$ consisting of all functions of sum $0$. Indeed when $X$ is a point these modules coincide. We also define
a cohomology class $[J]$, called the Johnson class of the action, which lives in the first bounded
cohomology group of $G$ with coefficients in the module $\no(G,X)^{**}$. 
We have the following theorem.

\begin{mainthmtwo}
Let $G$ be a countable discrete group acting by homeomorphisms on a compact Hausdorff topological space $X$. Then the following are equivalent
\begin{enumerate}
\item \label{amen} The action of $G$ on $X$ is topologically amenable.
\item \label{J} The class $[J]\in H_b^1(G, N_0(G,X)^{**})$ is trivial.
\item \label{vanish}   $H_b^p(G,\cale^*) = 0 $ for $p\geq 1$ and every $\ell^1$-geometric $\gcx$ module $\cale$. 
\end{enumerate}
\end{mainthmtwo}

The definition of $\ell^1$-geometric $\gcx$ module  is given in Section \ref{section : geometric modules}.
When $X$ is a point our theorem reduces to Johnson's celebrated characterisation of amenability
\cite{Johnson}.
As a corollary we also obtain a cohomological characterisation of exactness for discrete groups, which answers a question of Higson, and which follows 
from our main result when $X$ is the Stone-\v{C}ech compactification $\beta G$ of the group $G$. In this case, $C(\beta G)$ can be identified with 
$\ell^\infty(G)$, and we obtain the following. 
\begin{cor}
Let $G$ be a countable discrete group. Then the following are equivalent. 
\begin{enumerate}
\item The group $G$ is exact; 
\item The Johnson class $[J]\in H^1_b(G,\no(G,\beta G)^{**})$ is trivial; 
\item $H^p_b(G,\cale^*) = 0$ for $p\geq 1$ and every $\ell^1$-geometric $G$-$\ell^\infty(G)$-module $\cale$. 
\end{enumerate}
\end{cor}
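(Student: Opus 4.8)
The plan is to obtain the Corollary as the instance $X=\beta G$ of Theorem B. The first ingredient is the standard identification of $C^*$-algebras $C(\beta G)\cong\ell^\infty(G)$: since $G$ is discrete, every bounded complex-valued function on $G$ extends uniquely to a continuous function on the Stone--\v{C}ech compactification, and this identification intertwines the translation action of $G$ on $\ell^\infty(G)$ with the action on $C(\beta G)$ induced by left translation of $G$ on itself. Under this identification the module $\wo(G,\beta G)$ is built from $\ell^\infty(G)$ exactly as $\wo(G,X)$ is built from $C(X)$, the submodule $\no(G,\beta G)$ and the Johnson class $[J]\in H^1_b(G,\no(G,\beta G)^{**})$ are those appearing in Theorem B, and an $\ell^1$-geometric $G$-$C(\beta G)$-module is precisely an $\ell^1$-geometric $G$-$\ell^\infty(G)$-module.

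The second ingredient is the bridge between exactness of $G$ and topological amenability of the action of $G$ on $\beta G$. A countable discrete group carries a proper left-invariant metric, and for such a group Ozawa's theorem \cite{ozawa} identifies exactness with Yu's property A, while the Higson--Roe theorem \cite{HR} identifies property A with topological amenability of the action of $G$ on $\beta G$ (equivalently, on every compact Hausdorff space). Hence ``$G$ is exact'' is equivalent to condition (1) of Theorem B applied with $X=\beta G$. Putting these together, Theorem B with $X=\beta G$ immediately gives that the following are equivalent: $G$ is exact; the action of $G$ on $\beta G$ is topologically amenable; the class $[J]\in H^1_b(G,\no(G,\beta G)^{**})$ is trivial; and $H^p_b(G,\cale^*)=0$ for all $p\geq 1$ and all $\ell^1$-geometric $G$-$\ell^\infty(G)$-modules $\cale$.

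The only step that is not purely formal is checking that the isomorphism $C(\beta G)\cong\ell^\infty(G)$ is genuinely $G$-equivariant and that it carries the class of $\ell^1$-geometric $C(X)$-modules bijectively onto the class of $\ell^1$-geometric $\ell^\infty(G)$-modules from Section \ref{section : geometric modules}, so that Theorem B can be invoked verbatim; this is essentially bookkeeping once the definitions are in hand. A secondary point worth a sentence is that, although the Higson--Roe equivalence is usually stated for finitely generated groups, property A and amenability of the $\beta G$-action depend only on the coarse equivalence class of the chosen proper metric, so the equivalence applies to all countable discrete $G$. With those points dispatched the Corollary is simply a restatement of Theorem B in the case $X=\beta G$, and I do not expect any serious obstacle.
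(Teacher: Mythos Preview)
Your proposal is correct and follows exactly the route the paper indicates: the Corollary is obtained from Theorem~B by specialising to $X=\beta G$, using the $G$-equivariant identification $C(\beta G)\cong\ell^\infty(G)$ together with the Higson--Roe and Ozawa results linking exactness, property~A, and amenability of the $\beta G$-action. Your additional remarks on equivariance and on extending from finitely generated to countable groups are careful bookkeeping that the paper leaves implicit.
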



This paper builds on the cohomological characterisation of property A developed in \cite{BrodzkiNibloWright} and on the study of cohomological properties of exactness in \cite{DouglasNowak}. 


\section{Geometric Banach modules}

Let $C(X)$ denote the space of real-valued continuous functions on $X$.  For a  function $f: G\ra C(X)$ we shall denote by $f_g$ the continuous function on $X$ obtained by evaluating $f$ at  $g\in G$.  We define  the $\sup-\ell^1$ norm of $f$ to be 
$$\|f\|_{\infty, 1}=\sup\limits_{x\in X}\sum\limits_{g\in G}|f_g(x)|,$$ 
and denote by  $V$ the Banach space of all functions on $G$ with values in $C(X)$ that have finite norm. We introduce a Banach $G$-module associated to the action.

\begin{defn}
Let  $\woo(G,X)$ be the subspace of $V$ consisting of all functions $f: G\ra C(X)$ which have finite support and such that for some $c\in \R$, depending on $f$, 
$\sum_{g\in G} f_g = c 1_X$, where $1_X$ denotes the constant function $1$ on $X$. The closure of this space in the $\sup-\ell^1$-norm will be denoted $\wo(G,X)$. 
\end{defn}

Let $\pi: W_{00}(G,X) \ra \R$ be defined by  $\sum_{g\in G}f_g = \pi(f)1_X $. The map $\pi$ is continuous with respect to the $\sup-\ell^1$ norm and so extends 
to the closure $W_0 (G,X)$; we denote its kernel by $N_0(G,X)$. 

In the case of $X=\beta G$ and $C(\beta G)=\ell^\infty(G)$ the space $W_{0}(G,\beta G)$
was introduced in \cite{DouglasNowak}. 
For every $g\in G$ we define the function $\delta_g\in \woo(G,X)$ by $\delta_g(h) = 1_X$ when $g=h$, and  zero otherwise.

The $G$-action on $X$ gives an isometric action of $G$ on $C(X)$ in the usual way: for $g\in G$ and $f\in C(X)$, we have 
$(g\cdot f)(x) = f(g^{-1}x)$.  The group $G$ also acts isometrically on the space $V$ in a natural way:  for $g,h\in G, f\in V, x\in X$, we have $(gf)_h(x)=f_{g^{-1}h}(g^{-1}x)= (g\cdot f_{g^{-1}h})(x)$.

Since the summation map $\pi$ is $G$-equivariant (we assume that the action of $G$ on $\R$ is trivial) the action of $G$ restricts to $W_{00}(G,X)$ and so by continuity it restricts to $W_0(G,X)$. We obtain a short exact sequence of $G$-vector spaces: 
\[
0 \ra N_0(G,X)\ra \wo(G,X) \xr{\pi} \R \ra 0.
\]

\begin{defn}
Let $\cale$ be a Banach space. We say that $\cale$ is a $C(X)$-module if it is equipped with a contractive unital representation of the Banach algebra $C(X)$. 

If $X$ is a $G$-space then a $C(X)$-module $\cale$ is said to be a \gcx-module if the group $G$ acts on $\cale$ by isometries and the representation of $C(X)$ is $G$-equivariant.

\end{defn}

Note that the fact that we will only ever consider unital representations of $C(X)$ means that there is no confusion between multiplying by a scalar or by the corresponding constant function. For instance, for $f\in W_{0}(G,X)$ multiplication by $\pi(f)$ agrees with multiplication by $\pi(f)1_X$.

\begin{example}
The space $V$ is a \gcx-module. Indeed, for every $f\in V$ and $t\in C(X)$ we define $tf\in V$ by 
$(tf)_g(x) = t(x)f_g(x)$, for all $g\in G$. This action is well-defined as $\norm{tf}_{\infty,1} \leq\norm{t}_\infty\norm{f}_{\infty,1}$; this also implies that the representation of $C(X)$ on $V$ is contractive. As remarked above, the group $G$ acts isometrically on $V$. 
The representation of $C(X)$ is clearly unital and also equivariant, since for every $g\in G$, $f\in V$ and $t\in C(X)$
\[
\begin{split}
(g (tf))_h(x)& = (tf)_{g^{-1}h}(g^{-1}x)= t(g^{-1}x)f_{g^{-1}h}(g^{-1}x) = (g\cdot t)(x)(gf)_h(x)  
\end{split}
\]
Thus we have $g (tf) = (g\cdot t)(g f)$. 

The equivariance of the summation map $\pi$ implies that both $\wo(G,X)$ and $\no(G,X)$ are $G$-invariant subspaces of $V$. Note however, that $\wo(G,X)$ is not invariant under the action of $C(X)$ defined above, as for 
$f\in \wo(G,X)$ and $t\in C(X)$ we have 
\[
\sum_{g\in G}(tf)_g(x) = \sum_{g\in G}t(x)f_g(x)= t(x)\sum_{g\in G}f_g(x) = ct(x).
\]
 However, the same calculation shows that the subspace $\noo(G,X)$ is invariant under the action of $C(X)$, and so is a \gcx-module,  and hence so is its closure$\no(G,X)$.

%
\end{example}

Let  $\cale$ be a \gcx-module, let $\cale^*$ be the Banach dual of $\cale$ and let $\langle -,-\rangle$ be the pairing between the two spaces. The induced actions of $G$ and $C(X)$ on $\cale^*$ are defined as follows. 
For $\alpha \in \cale^*$, $g\in G$, $f\in C(X)$,  and $v\in \cale$ we let
\[
\langle g\alpha, v\rangle = \langle \alpha ,g^{-1}v\rangle, \qquad 
\langle f \alpha, v\rangle = \langle \alpha, f v\rangle. 
\]
Note that the action of $C(X)$ is well-defined since $C(X)$ is commutative. it is easy to check the following.

\begin{lemma}
If $\cale$ is a \gcx module, then so is $\cale^*$. 
\end{lemma}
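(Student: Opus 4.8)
The plan is to verify directly that each defining property of a \gcx-module is preserved under passage to the Banach dual, in each case by dualising the corresponding property of $\cale$. Recall the formulas $\langle g\alpha, v\rangle = \langle\alpha, g^{-1}v\rangle$ and $\langle f\alpha, v\rangle = \langle\alpha, fv\rangle$ for $g\in G$, $f\in\cx$, $\alpha\in\cale^*$, $v\in\cale$.

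First I would check that $G$ acts isometrically on $\cale^*$. For each $g\in G$ the map $v\mapsto gv$ is a surjective isometry of $\cale$, hence so is its inverse $v\mapsto g^{-1}v$; the operator $\alpha\mapsto g\alpha$ is by definition the Banach-space adjoint of $v\mapsto g^{-1}v$, so it is a surjective isometry of $\cale^*$, and $g\mapsto(\alpha\mapsto g\alpha)$ is visibly a group homomorphism. Next, for the $\cx$-action: contractivity follows from $|\langle f\alpha,v\rangle| = |\langle\alpha,fv\rangle|\le\norm{\alpha}\,\norm{fv}\le\norm{f}_\infty\norm{\alpha}\norm{v}$, whence $\norm{f\alpha}\le\norm{f}_\infty\norm{\alpha}$; the representation is unital since $1_X v = v$ forces $1_X\alpha=\alpha$; and it is multiplicative since, for $f,t\in\cx$, one has $\langle f(t\alpha),v\rangle = \langle t\alpha,fv\rangle = \langle\alpha,t(fv)\rangle = \langle\alpha,(tf)v\rangle = \langle\alpha,(ft)v\rangle = \langle (ft)\alpha,v\rangle$, using commutativity of $\cx$ and the module axioms on $\cale$.

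Finally I would check $G$-equivariance of the $\cx$-representation on $\cale^*$, i.e. $g(f\alpha)=(g\cdot f)(g\alpha)$. Pairing with $v\in\cale$ and invoking equivariance of the $\cx$-action on $\cale$ in the form $g^{-1}((g\cdot f)v)=f(g^{-1}v)$ gives
\[
\langle (g\cdot f)(g\alpha),v\rangle = \langle g\alpha,(g\cdot f)v\rangle = \langle\alpha,g^{-1}((g\cdot f)v)\rangle = \langle\alpha,f(g^{-1}v)\rangle = \langle f\alpha,g^{-1}v\rangle = \langle g(f\alpha),v\rangle.
\]
The computations are entirely routine, and there is no substantive obstacle; the only point requiring a little care is the bookkeeping of inverses together with the repeated use of commutativity of $\cx$ and of the module identity $g^{-1}((g\cdot f)v)=f(g^{-1}v)$ on $\cale$, which is where I would be most attentive.
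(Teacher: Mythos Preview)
Your verification is correct and complete. The paper itself does not give a proof of this lemma, merely stating ``it is easy to check the following,'' so your direct check of each axiom---isometric $G$-action, contractive unital multiplicative $C(X)$-representation, and $G$-equivariance---is exactly what is implicitly intended.
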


We will now introduce a geometric condition on Banach modules which will play the role of an orthogonality condition. To motivate the definition that follows, let us note that if $f_1$ and $f_2$ are functions with disjoint supports on a space $X$ then (assuming that the relevant norms are finite) the $\sup$-norm satisfies the identity $\norm{f_1+f_2}_\infty = \sup \{ \norm{f_1}_\infty, 
\norm{f_2}_\infty\}$, while for the  $\ell^1$-norm we have $\norm{f_1+f_2}_{\ell^1}
 = \norm{f_1}_{\ell^1}+ \norm{f_2}_{\ell^1}$. 
\begin{defn}\label{definition : geometric modules}
Let $\cale$ be a Banach space and a $C(X)$-module. 
We say that $v_1$ and $v_2$ in $\cale$ are disjointly supported if there exist $f_1,f_2\in \cx$ with disjoint supports such that
$f_1v_1 = v_1$ and $f_2v_2 = v_2$. 

We say that the module $\cale$ is $\ell^\infty$-geometric if, whenever $v_1$ and $v_2$ have disjoint supports,  $\norm{v_1+v_2} = sup \{\norm{v_1},\norm{v_2}\}$.

We say that the module $\cale$ is $\ell^1$-geometric if for every two disjointly supported $v_1$ and $v_2$ in 
$\cale$ $\norm{v_1+v_2} = \norm{v_1} + \norm{v_2}$. 
\end{defn}

If $v_1$ and $v_2$ are disjointly supported elements of $\cale$ and $f_1$ and $f_2$ are as in the definition, then $f_1v_2 = f_1f_2v_2 = 0$, and similarly $f_2v_1=0$. 

Note also that  the functions $f_1$ and $f_2$ can be chosen to be of norm one in the supremum norm on $C(X)$.  
To see this, note that Tietze's extension theorem allows one to construct  continuous functions $f_1', f_2'$ on $X$ which are of norm one, have disjoint supports and such that $f_i'$ takes the 
value $1$ on $\supp f_i$ . Then 
$f_i'\phi_i = (f_i'f_i)\phi_i = f_i\phi_i = \phi_i$. Now replace $f_i$ with $f_i'$.

Finally,  if $f_1,f_2\in \cx$ have disjoint supports then, again by Tietze's extension theorem, $f_1v_1$ and $f_2v_2$ are disjointly supported for all 
$v_1,v_2\in \cale$.

\begin{lemma}
If   $\cale$ is an  $\ell^1$-geometric  module then $\cale^*$ is $\ell^\infty$-geometric. 

If  $\cale$ is an $\ell^\infty$-geometric module then  $\cale^*$ is $\ell^1$-geometric. 
\end{lemma}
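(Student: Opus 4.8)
The plan is to prove both halves by the same device: transporting the witnessing functions $f_1,f_2\in\cx$ across the duality pairing $\langle -,-\rangle$ between $\cale$ and $\cale^*$, and using that $f_1f_2=0$ together with the commutativity and unitality of the $\cx$-representation. So fix disjointly supported $\alpha_1,\alpha_2\in\cale^*$. By the Tietze argument recorded just before the statement we may choose $f_1,f_2\in\cx$ of supremum norm one, with disjoint supports, such that $f_i\alpha_i=\alpha_i$; in particular $\norm{f_1+f_2}_\infty\leq 1$. The identities used throughout are $\langle f\alpha,v\rangle=\langle\alpha,fv\rangle$ and $f_i\alpha_j=f_if_j\alpha_j=0$ for $i\neq j$, so that whenever $\alpha_1+\alpha_2$ is evaluated against a vector of the form $f_iv$ only the $i$-th functional contributes. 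I also use that $\norm{\beta}=\sup\{\langle\beta,v\rangle:\norm{v}\leq 1\}$ for $\beta\in\cale^*$ (the scalars being real) and that $f_1v$ and $f_2v$ are disjointly supported in $\cale$ for every $v\in\cale$, as noted before the statement.

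Suppose first that $\cale$ is $\ell^1$-geometric; the goal is $\norm{\alpha_1+\alpha_2}=\sup\{\norm{\alpha_1},\norm{\alpha_2}\}$. The inequality $\geq$ uses only disjointness: given $\eps>0$ pick $v$ with $\norm{v}\leq 1$ and $\langle\alpha_1,v\rangle>\norm{\alpha_1}-\eps$, and evaluate against $f_1v$, which has norm at most $1$; then $\langle\alpha_1+\alpha_2,f_1v\rangle=\langle\alpha_1,v\rangle+\langle f_1\alpha_2,v\rangle=\langle\alpha_1,v\rangle>\norm{\alpha_1}-\eps$, and symmetrically for $\alpha_2$. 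For $\leq$, let $\norm{v}\leq 1$; since $f_1v,f_2v$ are disjointly supported and $f_1v+f_2v=(f_1+f_2)v$ has norm at most $\norm{f_1+f_2}_\infty\norm{v}\leq 1$, $\ell^1$-geometry gives $\norm{f_1v}+\norm{f_2v}\leq 1$, whence $|\langle\alpha_1+\alpha_2,v\rangle|=|\langle\alpha_1,f_1v\rangle+\langle\alpha_2,f_2v\rangle|\leq\norm{\alpha_1}\norm{f_1v}+\norm{\alpha_2}\norm{f_2v}\leq\sup\{\norm{\alpha_1},\norm{\alpha_2}\}$.

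Suppose next that $\cale$ is $\ell^\infty$-geometric; the goal is $\norm{\alpha_1+\alpha_2}=\norm{\alpha_1}+\norm{\alpha_2}$. Here $\leq$ is the triangle inequality. For $\geq$, given $\eps>0$ pick $v_i$ with $\norm{v_i}\leq 1$ and $\langle\alpha_i,v_i\rangle>\norm{\alpha_i}-\eps$, and replace $v_i$ by $f_iv_i$: this keeps $\norm{v_i}\leq 1$ and leaves $\langle\alpha_i,v_i\rangle$ unchanged since $\langle\alpha_i,f_iv_i\rangle=\langle f_i\alpha_i,v_i\rangle=\langle\alpha_i,v_i\rangle$, and now $v_1,v_2$ are disjointly supported, so $\norm{v_1+v_2}=\sup\{\norm{v_1},\norm{v_2}\}\leq 1$. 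The cross terms vanish, $\langle\alpha_i,v_j\rangle=\langle\alpha_i,f_if_jv_j\rangle=0$ for $i\neq j$, so $\langle\alpha_1+\alpha_2,v_1+v_2\rangle=\langle\alpha_1,v_1\rangle+\langle\alpha_2,v_2\rangle>\norm{\alpha_1}+\norm{\alpha_2}-2\eps$, giving $\norm{\alpha_1+\alpha_2}>\norm{\alpha_1}+\norm{\alpha_2}-2\eps$; let $\eps\to 0$.

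I do not expect a genuine obstacle here. The only delicate point is the bookkeeping of which $f_i$ to multiply by and on which side of the pairing, together with checking that the disjoint-support witnesses pass correctly between $\cale$ and $\cale^*$ — all of which is controlled by $f_if_j=0$ for $i\neq j$, the adjunction $\langle f\alpha,v\rangle=\langle\alpha,fv\rangle$, and the earlier remark that $f_1v,f_2v$ inherit disjoint support. It is worth flagging the asymmetry: in each statement one of the two inequalities is free — it holds for every $\cx$-module, by disjointness in the $\ell^1\Rightarrow\ell^\infty$ case and by the triangle inequality in the $\ell^\infty\Rightarrow\ell^1$ case — and the geometric hypothesis on $\cale$ is invoked only for the remaining one.
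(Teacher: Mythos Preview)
Your proof is correct and follows essentially the same approach as the paper's: both transport the witnessing functions $f_1,f_2$ across the pairing via $\langle f\alpha,v\rangle=\langle\alpha,fv\rangle$, use $\ell^1$-geometry on $\cale$ to get $\norm{f_1v}+\norm{f_2v}\leq 1$ for the upper bound in the first part, and use $\ell^\infty$-geometry on $\cale$ to bound $\norm{f_1v_1+f_2v_2}\leq 1$ for the lower bound in the second. The only cosmetic difference is that the paper obtains the reverse inequality in the first part via $\norm{\phi_1}=\norm{f_1(\phi_1+\phi_2)}\leq\norm{\phi_1+\phi_2}$ directly, whereas you run the equivalent $\eps$-argument.
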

\begin{proof}
Let us assume that  $\phi_1,\phi_2\in \cale^*$ are disjointly supported and let $f_1,f_2\in C(X)$ be as in Definition \ref{definition : geometric modules}, chosen to be of norm $1$. 

If  $\cale$ is 
$\ell^1$-geometric, then for every vector $v\in \cale$,
$\norm{f_1v} + \norm{f_2v} = \norm{(f_1+f_2)v}\leq \norm{v}$. Furthermore, 
\[
\begin{split}
\norm{\phi_1+\phi_2} & = \sup_{\norm{v}=1}|\langle\phi_1+\phi_2,v\rangle| = \sup_{\norm{v}=1}|\langle f_1\phi_1,v\rangle +\langle f_2\phi_2,v\rangle|\\
& = \sup_{\norm{v}=1}|\langle\phi_1,f_1v\rangle+\langle\phi_2,f_2v\rangle|
\\
& \leq \sup_{\norm{v}=1}(\norm{\phi_1}\norm{f_1v}+\norm{\phi_2}\norm{f_2v})
\\
& \leq \sup\{\norm{\phi_1},\norm{\phi_2}\}
\sup_{\norm{v}=1}(\norm{f_1v} + \norm{f_2v})\\
&
\leq \sup\{\norm{\phi_1},\norm{\phi_2}\}
\end{split}
\]
Since $f_1\phi_2 = 0$ we have that 
\[
\norm{\phi_1} = \norm{f_1(\phi_1+\phi_2)}\leq\norm{f_1} \norm{\phi_1+\phi_2} = \norm{\phi_1+\phi_2}. 
\]
Similarly, we have $\norm{\phi_2} \leq \norm{ \phi_1+\phi_2}$,  and the two estimates together ensure  that $\norm{\phi_1 +\phi_2} =\sup\{\norm{\phi_1},\norm{\phi_2}\}$ as required.

For the second statement, let us assume that $\cale$ is $\ell^\infty$-geometric and that $\phi_1,\phi_2 \in \cale^*$ are 
disjointly supported. Then
\[
\begin{split}
\norm{\phi_1}+\norm{\phi_2} & = \sup_{\norm{v_1},\norm{v_2}=1}
\langle\phi_1,v_1\rangle +\langle\phi_2,v_2\rangle \\
&  =  \sup_{\norm{v_1},\norm{v_2}=1} \langle \phi_1, f_1v_1\rangle +\langle \phi_2, f_2v_2\rangle\\
&  =  \sup_{\norm{v_1},\norm{v_2}=1} \langle \phi_1+\phi_2,  f_1v_1 + f_2v_2\rangle\\
& \leq  \sup_{\norm{v_1},\norm{v_2}=1} \norm{\phi_1+\phi_2}\norm{f_1v_1+f_2v_2}\\
& \leq \norm{\phi_1+\phi_2} \leq \norm{\phi_1} + \norm{\phi_2}. 
\end{split}
\]
where the last inequality is just the triangle inequality, so the inequalities are equalities throughout and $\norm{\phi_1}+\norm{\phi_2}=\norm{\phi_1+\phi_2}$ as required.
\end{proof}

We have already established that $\no(G,X)^{}$ is a \gcx-module. Let $\phi^1$ and 
$\phi^2$ be disjointly supported elements of $\no(G,X)$; this means that there exist disjointly supported  functions $f_1$ and $f_2$ in $C(X)$ such that 
$\phi^i = f_i\phi^i$ for $i=1,2$. Then 
\[
\norm{\phi^1+\phi^2}_{\infty,1} = \norm{f_1\phi^1 + f_2\phi^2} = \sup_{x\in X}\sum_{g\in G}|f_1(x)\phi^1_g(x) + f_2(x)\phi^2_g(x) | 
\]
We note that the two terms on the right are disjointly supported functions on $X$ and 
so 
\[
\norm{\phi^1+\phi^2}_{\infty,1} =  \sup_{x\in X}\left(\sum_{g\in G}|f_1(x)\phi^1_g(x)| + \sum_{g\in G}|f_2(x)\phi^2_g(x)| \right)
= \sup(\norm{\phi^1}_{\infty,1}, \norm{\phi^2}_{\infty,1}).\]
Thus we obtain

\begin{lemma}\label{lemma : N0 is ell-infty geometric}
The module $\no(G,X)$ is $\ell^\infty$-geometric.
Hence the dual $\no(G,X)^*$ is $\ell^1$-geometric and the double dual $\no(G,X)^{**}$ is $\ell^{\infty}$-geometric.
\end{lemma}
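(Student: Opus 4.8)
The plan is to observe that the first assertion is essentially the computation carried out in the display immediately preceding the statement, and that the two dual assertions are purely formal consequences of the duality lemma established just above.

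First I would recall that $\no(G,X)$ is a $C(X)$-module, indeed a \gcx-module, as noted in the example above, so that the notion of disjoint support applies to its elements. Given disjointly supported $\phi^1,\phi^2\in\no(G,X)$, I would pick norm-one functions $f_1,f_2\in C(X)$ with disjoint supports satisfying $f_i\phi^i=\phi^i$ (possible by the Tietze-extension remarks). Since $\no(G,X)\subseteq V$, each $\phi^i$ is genuinely a function $G\to C(X)$ of finite $\sup$-$\ell^1$ norm, so $\norm{\phi^1+\phi^2}_{\infty,1}$ is literally $\sup_{x}\sum_{g}|f_1(x)\phi^1_g(x)+f_2(x)\phi^2_g(x)|$. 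For each fixed $x$ at most one of $f_1(x),f_2(x)$ is nonzero, so the modulus inside splits as a sum; summing over $g$ shows the inner sum equals $a(x)+b(x)$, where $a(x)=\sum_{g}|f_1(x)\phi^1_g(x)|$ and $b(x)=\sum_{g}|f_2(x)\phi^2_g(x)|$, and $a,b$ are nonnegative functions with disjoint supports. Taking the supremum over $x$ of a sum of two such functions yields $\sup\{\sup_x a(x),\sup_x b(x)\}=\sup\{\norm{\phi^1}_{\infty,1},\norm{\phi^2}_{\infty,1}\}$. This is exactly the calculation displayed above, and it shows $\no(G,X)$ is $\ell^\infty$-geometric.

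For the remaining two statements I would simply invoke the preceding lemma twice: since $\no(G,X)$ is $\ell^\infty$-geometric, its dual $\no(G,X)^*$ is $\ell^1$-geometric; and since $\no(G,X)^*$ is then $\ell^1$-geometric, its dual $\no(G,X)^{**}$ is $\ell^\infty$-geometric.

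I do not expect a genuine obstacle here. The only point meriting a moment's care is that the disjoint-support identity be valid for all of the closure $\no(G,X)$, not merely for the finitely supported elements of $\noo(G,X)$; this is automatic because $\no(G,X)$ is a closed subspace of $V$ and both the $\sup$-$\ell^1$ norm formula and the splitting of $|f_1(x)u+f_2(x)w|$ for $f_1,f_2$ of disjoint support hold for arbitrary elements of $V$, so no approximation argument is required.
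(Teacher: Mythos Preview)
Your proposal is correct and follows the paper's approach exactly: the first assertion is precisely the displayed computation preceding the lemma (disjoint supports of $f_1,f_2$ force the modulus to split pointwise, after which the $\sup$ of a sum of disjointly supported nonnegative functions equals the max of the individual $\sup$'s), and the two dual assertions follow by applying the preceding duality lemma twice. Your remark about validity on the full closure $\no(G,X)$ rather than just $\noo(G,X)$ is a sensible clarification, though the paper's computation already works directly in $V$ and hence in $\no(G,X)$.
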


We now assume that $\cale$ is an  $\ell^1$-geometric $C(X)$-module, so that its dual $\cale^*$ is  $\ell^\infty$-geometric.

\begin{lemma}
Let $f_1,f_2\in \cx$ be non-negative functions such that $f_1+f_2 \leq 1_X$. Then for every 
$\phi_1,\phi_2\in \cale^*$
\[
\norm{ f_1\phi_1 +f_2\phi_2} \leq \sup\{\norm{\phi_1},\norm{\phi_2}\}.
\]
\end{lemma}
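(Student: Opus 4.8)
The plan is to dualise. Testing $f_1\phi_1+f_2\phi_2$ against a vector $v\in\cale$ with $\norm v=1$ and using the definition $\langle f\alpha,v\rangle=\langle\alpha,fv\rangle$ of the induced $\cx$-action on $\cale^*$, we get $\langle f_1\phi_1+f_2\phi_2,v\rangle=\langle\phi_1,f_1v\rangle+\langle\phi_2,f_2v\rangle$, hence
\[
\norm{f_1\phi_1+f_2\phi_2}\le\sup_{\norm v=1}\bigl(\norm{\phi_1}\,\norm{f_1v}+\norm{\phi_2}\,\norm{f_2v}\bigr)\le\sup\{\norm{\phi_1},\norm{\phi_2}\}\cdot\sup_{\norm v=1}\bigl(\norm{f_1v}+\norm{f_2v}\bigr).
\]
So it is enough to prove $\norm{f_1v}+\norm{f_2v}\le\norm v$ for all $v\in\cale$; this is a statement purely about the $\ell^1$-geometric module $\cale$, using $f_i\ge 0$ and $f_1+f_2\le 1_X$ (which forces $f_1,f_2$ to take values in $[0,1]$).

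The easy case is when the functions are disjointly supported: if $g_1,g_2\in\cx$ have disjoint supports and take values in $[0,\tfrac1n]$, then $g_1v$ and $g_2v$ are disjointly supported (disjointly supported functions in $\cx$ produce disjointly supported vectors upon multiplication, as noted above), so $\ell^1$-geometry of $\cale$ gives
\[
\norm{g_1v}+\norm{g_2v}=\norm{g_1v+g_2v}=\norm{(g_1+g_2)v}\le\norm{g_1+g_2}_\infty\,\norm v\le\tfrac1n\norm v,
\]
since $g_1+g_2$ again takes values in $[0,\tfrac1n]$ and the representation of $\cx$ is contractive.

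To reduce the general case to this one I would use a ``staircase'' decomposition. For $n\in\N$ and $k=1,\dots,n$ set $u_k=\max\bigl(0,\min(\tfrac1n,f_1-\tfrac{k-1}{n})\bigr)$ and $w_k=\max\bigl(0,\min(\tfrac1n,f_2-\tfrac{n-k+2}{n})\bigr)$; these are continuous, take values in $[0,\tfrac1n]$, and satisfy $\sum_{k=1}^n u_k=f_1$ for every $n$, while $\sum_{k=1}^n w_k\to f_2$ uniformly as $n\to\infty$. The only place the hypothesis $f_1+f_2\le 1_X$ enters is that $\supp u_k$ and $\supp w_k$ are disjoint for each $k$: since $\supp u_k\subseteq\{f_1\ge\tfrac{k-1}{n}\}$ and $\supp w_k\subseteq\{f_2\ge\tfrac{n-k+2}{n}\}$, a common point would satisfy $f_1+f_2\ge\tfrac{n+1}{n}>1$. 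Applying the disjoint-case bound to each pair $(u_k,w_k)$ and summing over $k$,
\[
\Bigl\|\bigl(\sum_{k=1}^n u_k\bigr)v\Bigr\|+\Bigl\|\bigl(\sum_{k=1}^n w_k\bigr)v\Bigr\|\le\sum_{k=1}^n\bigl(\norm{u_kv}+\norm{w_kv}\bigr)\le\sum_{k=1}^n\tfrac1n\norm v=\norm v;
\]
since $\bigl(\sum_k u_k\bigr)v=f_1v$ and $\bigl(\sum_k w_k\bigr)v\to f_2v$ in norm (contractivity of the $\cx$-action once more), letting $n\to\infty$ gives $\norm{f_1v}+\norm{f_2v}\le\norm v$, which completes the proof.

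The delicate point, and the one I expect to need the most care, is arranging the $k$-th stairs of $f_1$ and $f_2$ to have genuinely disjoint \emph{closed} supports; this is exactly why the thresholds are offset so that they sum to $\tfrac{n+1}{n}$ rather than to $1$, at the cost of the harmless $O(1/n)$ discrepancy between $\sum_k w_k$ and $f_2$. Without the offset the two supports could meet along a level set $\{f_i=c\}$ and $\ell^1$-geometry would not apply. Everything else is routine bookkeeping with the contractive action of $\cx$.
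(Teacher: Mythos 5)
Your proof is correct. The combinatorial core --- slicing $f_1$ and $f_2$ into layers of height $1/n$, with thresholds offset so that paired layers have disjoint supports because a common point would force $f_1+f_2\ge\tfrac{n+1}{n}>1$ --- is exactly the paper's device: there the layers are $f_{i,0}=\min\{f_i,\eps\}$, $f_{i,j}=\min\{f_i-f_{i,0}-\dots-f_{i,j-1},\eps\}$, and the pairing matches $f_{1,j}$ with $f_{2,M+1-j}$. Where you genuinely differ is in which space the slicing is exploited. The paper stays in $\cale^*$ and applies the $\ell^\infty$-geometric property of the dual to each pair $f_{1,j}\phi_1+f_{2,M+1-j}\phi_2$ (each contributing at most $\eps\sup_i\norm{\phi_i}$), absorbing the two bottom layers of each function into an extra $4\eps$ and arriving at $(1+2\eps)\sup_i\norm{\phi_i}$ before letting $\eps\to 0$. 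You instead dualise first, reducing the lemma to the predual inequality $\norm{f_1v}+\norm{f_2v}\le\norm{v}$, and then run the slicing using the $\ell^1$-geometric property of $\cale$ itself. This buys a cleaner endgame --- exact telescoping for $f_1$, a uniform $O(1/n)$ error only for $f_2$, and no $(1+2\eps)$ factor --- and it isolates an intermediate inequality about $\ell^1$-geometric modules that is slightly stronger and arguably more natural than what the paper records. Both arguments rest on the same two supporting facts established just before the lemma: disjointly supported functions in $\cx$ yield disjointly supported module elements (via Tietze), and the representation of $\cx$ is contractive.
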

\begin{proof}
Let $M\in \N$ and $\eps=1/M$. For $i=1,2$ define $f_{i,0} = \min\{f_i,\eps\}$, 
$f_{i,1} = \min\{f_i-f_{i,0},\eps\}$, $f_{i,2}=\min\{ f_i-f_{i,0}-f_{i,1}, \eps\}$, and so on, to $f_{i,M-1}$. 

Then $f_{i,j}(x) = 0$ iff $f_i(x) \leq j\eps$, so $f_{i,j} > 0$ iff $f_i(x) > j\eps$ which implies that 
$\supp f_{i,j} \subseteq f^{-1}_i([j\eps,\infty))$. So for $j\geq 2$, $\supp (f_{1,j}) \subseteq 
f^{-1}_1([j\note{\epsilon},\infty))$ and $\supp f_{2,M+1-j}\subseteq f^{-1}_2([(M+1-j)\note{\epsilon},\infty))$. 

If $x\in \supp (f_{1,j}) \cap \supp (f_{2,M+1-j})$ then 
$1\note{\geq} f_1(x) + f_2(x) \geq j\eps +(M+1-j)\eps = 1+\eps$, so the two supports  $\supp (f_{1,j}), \supp (f_{2,M+1-j})$ are disjoint. 

We have that 
\[
\begin{split}
f_1 & = f_{1,0} + f_{1,1} + \sum_{j=2}^{M-1} f_{1,j}\\
f_2& = f_{2,0} + f_{2,1} + \sum_{j=2}^{M-1}f_{2,M+1-j}. 
\end{split}
\]
So using the fact that $\norm{
f_{1,j}\phi_1+f_{2,M+1-j}\phi_2} \leq \sup \{
\note{\norm{f_{1,j}\phi_1}},\note{\norm{f_{2,M+1-j}\phi_2}}\} \leq \eps\sup_i\norm{\phi_i}$ we have the following estimate: 
\[
\begin{split}
\norm{f_1\phi_1 + f_2\phi_2} &\leq \norm{(f_{1,0} + f_{1,1})\phi_1} + 
\norm{(f_{2,0}+f_{2,1})\phi_2} + \sum_{j=2}^M\norm{
f_{1,j}\phi_1+f_{2,M+1-j}\phi_2} \\
& \leq 4\eps \sup_j\norm{\phi_i} + \sum_{j=2}^{M-1}\eps\sup_{\note{i}}\note{\norm{{\phi_i}}}\\
& = (4\eps +(M-2)\eps)\sup_i\norm{\phi_i}\\
& = (1+2\eps)\sup_i\norm{\phi_i}.
\end{split}
\]

\end{proof}

\begin{lemma}
Let $f_1,\dots, f_N\in \cx$, $f_i\geq 0$, $\sum_{i=1}^Nf_i \leq 1_X$, $\phi_1,\dots,\phi_N\in \cale^*$.

Then $\norm{\sum_i f_i\phi_i} \leq \sup_{1,\dots,N}\norm{\phi_i}$. 
\end{lemma}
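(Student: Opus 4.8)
The plan is to argue by induction on $N$, using the preceding lemma (the two--function case $f_1+f_2\leq 1_X$) as the engine that combines terms. For the base case $N=1$ one simply observes that $f_1\geq 0$ and $f_1\leq 1_X$ force $\norm{f_1}_\infty\leq 1$, so contractivity of the representation of $\cx$ on $\cale^*$ gives $\norm{f_1\phi_1}\leq\norm{f_1}_\infty\norm{\phi_1}\leq\norm{\phi_1}$.

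For the inductive step, set $g=\sum_{i=1}^{N-1}f_i$, so that $g\geq 0$ and $g+f_N\leq 1_X$. The idea is to fold the first $N-1$ terms into a single term of the form $g\psi$ and then apply the two--function lemma to $g$ and $f_N$; the one genuine obstacle is that $g$ may vanish on $X$, so the natural coefficients $f_i/g$ need not be continuous. To circumvent this, fix $\delta>0$ and put $g_\delta=\max\{g,\delta 1_X\}$, which is continuous, bounded below by $\delta$, and satisfies $g\leq g_\delta\leq g+\delta 1_X$. Then $h_i:=f_i/g_\delta\in\cx$ for $i<N$ are non--negative with $\sum_{i<N}h_i=g/g_\delta\leq 1_X$, so the inductive hypothesis applies to $h_1,\dots,h_{N-1}$ and $\phi_1,\dots,\phi_{N-1}$, yielding $\psi:=\sum_{i<N}h_i\phi_i$ with $\norm{\psi}\leq\sup_{1\leq i\leq N-1}\norm{\phi_i}$. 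Since $g_\delta$ vanishes nowhere, $g_\delta h_i=f_i$, hence $g_\delta\psi=\sum_{i<N}f_i\phi_i$.

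It remains to compare $g_\delta+f_N$ with $1_X$: one has $g_\delta+f_N\leq g+f_N+\delta 1_X\leq(1+\delta)1_X$, so the functions $\tfrac1{1+\delta}g_\delta$ and $\tfrac1{1+\delta}f_N$ are non--negative with sum at most $1_X$, and the previous lemma gives $\norm{\tfrac1{1+\delta}g_\delta\psi+\tfrac1{1+\delta}f_N\phi_N}\leq\sup\{\norm{\psi},\norm{\phi_N}\}$. Multiplying by $1+\delta$, using $g_\delta\psi+f_N\phi_N=\sum_{i=1}^N f_i\phi_i$ and $\sup\{\norm{\psi},\norm{\phi_N}\}\leq\sup_{1\leq i\leq N}\norm{\phi_i}$, we get $\norm{\sum_i f_i\phi_i}\leq(1+\delta)\sup_i\norm{\phi_i}$ for every $\delta>0$, and letting $\delta\to 0$ finishes the proof. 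The only subtle point is the regularisation $g_\delta$ forced by the possible vanishing of the partial sum $g$ (and the corresponding $(1+\delta)$ loss that then has to be absorbed in the limit); the rest is routine bookkeeping with the module structure.
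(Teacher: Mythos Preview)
Your proof is correct and follows essentially the same strategy as the paper's: induction on $N$, combining terms via the two-function lemma, and handling the possible vanishing of a partial sum by a $\delta$-regularisation that is removed in the limit. The only cosmetic differences are that the paper merges two of the $f_i$ (using the two-function lemma first, then the inductive hypothesis) and regularises with $f_0+f_1+\delta$, whereas you merge $N-1$ of them (induction first, then the two-function lemma) and regularise with $\max\{g,\delta\}$, absorbing the error as a $(1+\delta)$ factor instead of proving norm convergence.
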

\begin{proof}
We proceed by induction. Assume that the statement is true for some $N$. Then let $f_0,f_1,\dots, f_N\in \cx$, $f_i\geq 0$, $\sum_{i=1}^N
\note{f_i}\leq 1_X$, \note{and let} $\phi_0,\phi_1,\dots,\phi_N\in \cale^*$.

Let $f'_1= f_0+f_1$ and leave the other functions unchanged. \note{For $\delta >0$ let}
\[
\phi'_{1,\delta} = \frac{1}{f_0+f_1+\delta}(f_0\phi_0+f_1\phi_1).
\]
Since we clearly have 
\[
 \frac{f_0}{f_0+f_1+\delta} + \frac{f_1}{f_0+f_1+\delta}\leq \note{1_X}
 \]
 by the previous lemma we have that 
 $\norm{\phi'_{1,\delta}} \leq \sup\note{\{}\norm{\phi_0},\norm{\phi_1}\note{\}}$, and so \note{by induction}
 \[
 \norm{f'_1\phi'_{\note{1,\delta}} +f_2\phi_2+\dots +f_N\phi_N} \leq \note{\sup \{\norm{\phi'_{1,\delta}}, \norm{\phi_2}, \dots, \norm{\phi_N}\}} 
 \leq \sup_{i=0,\dots, N}\norm{\phi_i}. 
 \]
 
 Consider now 
 \[
 f'_1\phi'_{1,\note{\delta}} =   \frac{\note{(f_0+f_1)}}{f_0+f_1+\delta}(f_0\phi_0+f_1\phi_1) = 
 \note{\frac{(f_0+f_1)f_{0}}{f_0+f_1+\delta}\phi_0 + 
 \frac{(f_0+f_1)f_{1}}{f_0+f_1+\delta}\phi_1}.
 \]
 
 We note that 
 \note{for $i=0,1$} 
 \[
 f_{\note{i}} -  \frac{(f_0+f_1)f_{\note{i}}}{f_0+f_1+\delta} =  \frac{\delta f_{\note{i}}}{f_0+f_1+\delta} \leq \delta
 \]
 and so $\frac{(f_0+f_1)f_{\note{i}}}{f_0+f_1+\delta}$ converges to $f_{\note{i}}$ uniformly on $X$, as $\delta\to 0$, which implies that
 $f'_1\phi'_{1,\delta}$ converges to $f_0\phi_0+f_1\phi_1$ in norm, and the lemma follows. 
\end{proof}

\begin{lemma}\label{estimate}
If $f_1,\dots, f_N\in \cx$ (we do not assume that $f_i\geq 0$) are such that $\sum_{i=1}^N |f_i| \leq 1_X$ and 
$\phi_1,\dots, \phi_N\in \cale^*$ then
\[
\norm{\sum_{i=1}^Nf_i\phi_i}\leq 2\sup_{i=1,\dots, N}\norm{\phi_i}.
\]
\end{lemma}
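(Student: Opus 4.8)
The plan is to reduce directly to the previous lemma by splitting each $f_i$ into its positive and negative parts. Since $X$ is compact Hausdorff and each $f_i\in\cx$ is real-valued, the functions $f_i^+=\max\{f_i,0\}$ and $f_i^-=\max\{-f_i,0\}$ are again continuous and non-negative, and they satisfy $f_i=f_i^+-f_i^-$ as well as $|f_i|=f_i^++f_i^-$. Hence the hypothesis $\sum_{i=1}^N|f_i|\leq 1_X$ gives both $\sum_{i=1}^N f_i^+\leq 1_X$ and $\sum_{i=1}^N f_i^-\leq 1_X$.

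Next I would apply the previous lemma twice. Applied to the non-negative functions $f_1^+,\dots,f_N^+$, whose sum is at most $1_X$, together with the functionals $\phi_1,\dots,\phi_N\in\cale^*$, it yields $\norm{\sum_i f_i^+\phi_i}\leq\sup_i\norm{\phi_i}$; applied to $f_1^-,\dots,f_N^-$ with the same $\phi_i$ it yields $\norm{\sum_i f_i^-\phi_i}\leq\sup_i\norm{\phi_i}$. The triangle inequality then gives
\[
\norm{\sum_{i=1}^N f_i\phi_i}=\norm{\sum_{i=1}^N f_i^+\phi_i-\sum_{i=1}^N f_i^-\phi_i}\leq 2\sup_{i=1,\dots,N}\norm{\phi_i},
\]
which is exactly the asserted estimate.

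There is essentially no obstacle: the only points to verify are that $f_i^{\pm}$ are genuinely continuous (immediate, since $t\mapsto\max\{t,0\}$ is continuous) and that the previous lemma is being quoted with its actual hypotheses — non-negativity and total sum bounded by $1_X$, with no condition on supports. In fact one can do slightly better: feeding all $2N$ functions $f_1^+,\dots,f_N^+,f_1^-,\dots,f_N^-$ (whose total sum is $\sum_i|f_i|\leq 1_X$) into the previous lemma at once, with coefficients $\phi_1,\dots,\phi_N,-\phi_1,\dots,-\phi_N$, produces the sharper bound $\sup_i\norm{\phi_i}$. For the uses that follow, however, the constant $2$ is all that is required, so the two-term triangle-inequality argument suffices and is the cleanest to record.
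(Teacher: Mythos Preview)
Your argument is correct and is exactly the paper's proof: split each $f_i$ into positive and negative parts, apply the preceding lemma to the two families $\{f_i^+\}$ and $\{f_i^-\}$, and use the triangle inequality to obtain the factor $2$. Your parenthetical observation that feeding all $2N$ functions into the previous lemma at once yields the constant $1$ is also correct and is a genuine (if minor) improvement over what the paper records.
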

\begin{proof}
If $f_i = f_i^+ - f_i^-$, then $|f_i| = f_i^+ +f^-_i$ and $\sum f_i^+ + \sum f^-_i \leq 1$. 

Then by the previous lemma $\norm{\sum_{i=1}^Nf^{\pm}_i\phi_i }\leq \sup_{i=1,\dots, N}\norm{\phi_i}$
so 
\[
\norm{\sum f^+_i\phi_i- \sum f_i^-\phi_i}\leq 2\sup_{\note{i=1,\dots, N}} \norm{\phi_i}.
\]

\end{proof}

\section{Amenable actions and invariant means}
In this section we will recall the definition of a topologically amenable action
and characterise it in terms of the existence of a certain averaging operator. For our purposes the following definition, adapted from \cite[Definition 4.3.1]{BrownOzawa} is convenient.


\begin{defn}\label{action'}
The action of $G$ on $X$ is amenable if and only if there exists a sequence of 
elements $f^n\in W_{00}(G,X)$ such that  
\begin{enumerate}
\item $f^n_g\ge0$ in $C(X)$ for every $n\in \N$ and $g\in G$, 
\item $\pi(f^n)=1$ for every $n$,
\item for each $g\in G$ we have $\Vert f^n-g f^n\Vert_V\to 0$.
\end{enumerate}
\end{defn}

Note that when $X$ is a point the above conditions reduce to the definition of amenability
of $G$. On the other hand, if $X=\beta G$, the Stone-\v{C}ech compactification of $G$
then amenability of the natural action of $G$ on $X$ is equivalent to Yu's property A
by a result of Higson and Roe \cite{HR}.

\begin{remark}\label{normalise} In the above definition we may omit condition 1 at no cost, since given a sequence of functions satisfying conditions $2$ and $3$ we can make them positive by replacing each $f^n_g(x)$ by

\[
\frac{|f^n_g(x)|}{\sum\limits_{h\in G}|f^n_h(x)|}.
\]

Conditions $1$ and $2$ are now clear, while condition $3$ follows from standard estimates (see e.g. \cite[Lemma 4.9]{DouglasNowak}).

%
\end{remark}

The first definition of amenability of a group $G$ given by von Neumann was in terms 
of the existence of an invariant mean on the group. The following definition  gives
a version of an invariant mean for an amenable action on a compact space.

\begin{defn}\label{mean}
Let $G$ be a countable group acting on a compact space $X$ by homeomorphisms.  
A mean for the action is an element $\mu\in \wo(G,X)^{**}$ such that $\mu(\pi)=1$.
A mean $\mu$ is said to be invariant if $\mu(g \varphi)=\mu(\varphi)$
for  every $\varphi\in W_{0}(G,X)^*$.
\end{defn}


We now state our first main result.

\begin{mainthm}
Let $G$ be a countable discrete group acting by homeomorphisms on a compact Hausdorff topological space $X$. The action is amenable
if and only if there exists an invariant mean for the action.
\end{mainthm}

\begin{proof}
Let $G$ act amenably on $X$ and 
consider the sequence $f^n$ provided by Definition \ref{action'}.
Each $f^n$ satisfies $\Vert f^n\Vert=1$.
We now view the functions $f^n$ as elements of the double dual $W_0(G,X)^{**}$.
By the weak-* compactness of the unit ball there is a convergent subnet $f^{\lambda}$,
and we define $\mu$ to be its weak-* limit. It is then easy to verify that $\mu$ is a mean.
Since 
$$\vert \langle f^{\lambda}-gf^{\lambda}, \varphi\rangle\vert \le \Vert f^{\lambda}-gf^{\lambda}\Vert_V \Vert \varphi\Vert$$
and the right  hand side tends to 0, we obtain $\mu(\varphi)=\mu(g\varphi)$.

Conversely, by Goldstine's theorem, (see, e.g., \cite[Theorem 2.6.26]{Meg}) as $\mu \in \wo(G,X)^{**}$, $\mu$ is the weak-* limit of a bounded net of elements $f^\lambda \in \wo(G,X)$. 
We note that we can choose $f^\lambda$ in such a way that $\pi(f^{\lambda})=1$. Indeed, given $f^{\lambda}$
with $\pi(f^\lambda)=c_\lambda\to \mu(\pi)=1$ we replace each $f^\lambda$ by  
$$f^\lambda+(1-c_\lambda)\delta_e.$$
Since  $(1-c_\lambda)\delta_e\to 0$ in norm in $\wo(G,X)$,  $\mu$  is the weak-* limit of the net 
$f^\lambda+(1-c_\lambda)\delta_e$ as required.

Since $\mu$ is invariant, 
we have that for every $g\in G$, $g {f}^\lambda \to g\mu= \mu$, so that $g {f}^\lambda - {f}^\lambda \to 0$ in the weak-* topology. 
However, for every $g\in G$, $gf^\lambda - f^\lambda \in \wo(G,X)$, and so the convergence is in fact in the weak topology on 
$\wo(G,X)$.

For every $\lambda$, we regard the family $(gf^\lambda-f^\lambda)_{g\in G}$ as an element of the product 
$\prod_{g\in G} \wo(G,X)$, noting that this sequence converges to $0$ in 
the Tychonoff weak
  topology.


Now $\prod_{g\in G} \wo(G,X)$ is a Fr\'{e}chet space in the Tychonoff norm topology, so by Mazur's theorem there exists a sequence $f^n$ of convex combinations 
of $f^\lambda$ such that $(gf^n- f^n)_{g\in G}$ converges to zero in the Fr\'{e}chet topology. Thus there exists a sequence $f^n$ of elements of $\wo(G,X)$ such that for every $g\in G$, 
$\Vert gf^n - f^n\Vert \to 0$  in  $\wo(G,X)$.


The result then follows from 
Remark \ref{normalise}.
\end{proof}

\section{Equivariant means on geometric modules}

Given an invariant mean $\mu\in \wo(G,X)^{**}$ for the action of $G$ on $X$ and an $\ell^1$-geometric $\gcx$ module $\cale$, we define a $G$-equivariant averaging operator $\mu_\cale: \ell^\infty(G,\cale^*)\ra \cale^*$ which we will also refer to as an equivariant mean for the action.

To do so, following an idea from \cite{BrodzkiNibloWright}, we introduce a linear functional $\sigma_{\tau,v}$ on $\woo(G,X)$. 
Given a Banach space $\cale$ define $\ell^\infty(G,\mathcal{E})$ to be the space of functions 
$f:G\to\cale$ such that $\sup_{g\in G}\Vert f(g)\Vert_\cale<\infty$.
If $G$ acts on $\cale$ then the action of the group $G$ on the space $\ell^\infty(G,\cale)$ is 
defined in an analogous way to the action of $G$ on $V$, using the induced action of $G$ on 
$\cale$:
$$(g\bullet \tau)_h=g(\tau_{g^{-1}h}),$$
for $\tau\in \ell^\infty(G,\cale)$ and $g\in G$.

Let us assume that $\cale$ is an $\ell^1$-geometric $\gcx$ module, and let $\tau\in \ell^\infty(G,\cale^*)$. Choose a vector $v\in \cale$ and define a linear functional 
  $\sigma_{\tau,v}: \woo(G,X)\ra \R$ by 
 \begin{equation}\label{sigma}
 \sigma_{\tau,v} (f) = \langle\sum_{h\in G}f_h\tau_h,v\rangle
 \end{equation}
 for every $f\in \woo(G,X)$. 
If we now use  Lemma \ref{estimate} together with the support condition required of elements of $\woo(G,X)$ then we have 
the estimate
\[
|\sigma_{\tau,v}(f)|\leq \Big\Vert \sum_h f_h\tau_h\Big\Vert \norm{v} \leq 2\norm{f}\norm{\tau}\norm{v}.
\]
 This estimate completes the proof of the following. 
 \begin{lemma}
Let $\cale$ be an $\ell^1$-geometric $\gcx$ module. For every $\tau\in \ell^\infty(G,\cale^*)$ and every $v\in \cale$ the linear functional $\sigma_{\tau,v}$ on  $\woo(G,X)$ is continuous and so it extends 
to a continuous linear functional on $\wo(G,X)$. 
\end{lemma}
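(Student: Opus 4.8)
The plan is to establish the continuity estimate $|\sigma_{\tau,v}(f)| \leq 2\norm{f}_{\infty,1}\norm{\tau}\norm{v}$ for $f \in \woo(G,X)$, after which the extension to $\wo(G,X)$ is immediate since $\woo(G,X)$ is dense in $\wo(G,X)$ by definition and a bounded linear functional on a dense subspace extends uniquely to the closure. So the real content is the displayed estimate, and the key is to reduce it to Lemma \ref{estimate}.

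First I would fix $f \in \woo(G,X)$ and, since $f$ has finite support, write the finite set $S = \{h \in G : f_h \neq 0\}$, so that $\sigma_{\tau,v}(f) = \langle \sum_{h \in S} f_h \tau_h, v \rangle$. The obstacle to applying Lemma \ref{estimate} directly is that the functions $f_h \in C(X)$ need not satisfy $\sum_{h \in S} |f_h| \leq 1_X$; we only know $\norm{f}_{\infty,1} = \sup_{x}\sum_{h}|f_h(x)|$ is finite. The fix is a normalisation: if $\norm{f}_{\infty,1} = 0$ then $f = 0$ and there is nothing to prove, so assume $\norm{f}_{\infty,1} > 0$ and set $\tilde f_h = f_h / \norm{f}_{\infty,1}$. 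Then $\sum_{h \in S}|\tilde f_h| \leq 1_X$ pointwise on $X$, so Lemma \ref{estimate} applies to the finitely many functions $(\tilde f_h)_{h \in S}$ and the covectors $(\tau_h)_{h \in S}$ in $\cale^*$, giving $\norm{\sum_{h \in S} \tilde f_h \tau_h} \leq 2 \sup_{h \in S}\norm{\tau_h} \leq 2\norm{\tau}_{\ell^\infty(G,\cale^*)}$. Multiplying back through by $\norm{f}_{\infty,1}$ yields $\norm{\sum_h f_h \tau_h} \leq 2\norm{f}_{\infty,1}\norm{\tau}$, and pairing with $v$ and applying the definition of the dual norm on $\cale^*$ gives $|\sigma_{\tau,v}(f)| \leq \norm{\sum_h f_h\tau_h}\norm{v} \leq 2\norm{f}_{\infty,1}\norm{\tau}\norm{v}$, which is exactly the claimed estimate.

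I should double-check one subtlety: Lemma \ref{estimate} is stated for $\cale^*$ where $\cale$ is $\ell^1$-geometric, and the covectors $\tau_h$ are indeed elements of $\cale^*$, so this is exactly the hypothesis in force. I also want to make sure that $\sum_{h} f_h \tau_h$ genuinely lies in $\cale^*$ (not merely that its norm is controlled) — but this is a finite sum of elements of the Banach space $\cale^*$, so it is automatically an element of $\cale^*$, and linearity of $\sigma_{\tau,v}$ in $f$ is clear from the formula. Finally, for the extension step, since $\sigma_{\tau,v}$ is linear and bounded on the dense subspace $\woo(G,X)$ of the Banach space $\wo(G,X)$, it extends uniquely to a bounded linear functional on $\wo(G,X)$ of the same norm; I would cite this as the standard bounded linear extension principle rather than reprove it.

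I do not expect a genuine obstacle here — the lemma is a packaging result — but the one point requiring care is the normalisation by $\norm{f}_{\infty,1}$, since Lemma \ref{estimate} has the rigid hypothesis $\sum |f_i| \leq 1_X$ and one must not forget the factor of $\norm{f}_{\infty,1}$ that it produces. Everything else is bookkeeping.
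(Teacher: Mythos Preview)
Your proposal is correct and is essentially the paper's own argument spelled out in full: the paper simply records the estimate $|\sigma_{\tau,v}(f)|\leq \norm{\sum_h f_h\tau_h}\,\norm{v}\leq 2\norm{f}\,\norm{\tau}\,\norm{v}$, citing Lemma~\ref{estimate} and the finite-support condition on elements of $\woo(G,X)$, leaving implicit the normalisation by $\norm{f}_{\infty,1}$ that you have made explicit.
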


 

 \begin{lemma}
 The map $\ell^\infty(G,\cale^*)\times \cale\ra \wo(G,X)^*$ defined by $(\tau,v) \mapsto \sigma_{\tau,v}$ is $G$-equivariant. 
 \end{lemma}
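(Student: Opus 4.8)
The plan is to verify the identity $\sigma_{g\bullet\tau,\,gv} = g\cdot\sigma_{\tau,v}$ directly. Both sides are continuous linear functionals on $\wo(G,X)$: the functionals $\sigma_{\tau,v}$ are continuous by the previous lemma, and the $G$-action on $\wo(G,X)^*$ is by isometries. Since $\woo(G,X)$ is dense in $\wo(G,X)$, it suffices to check that the two functionals agree when evaluated on an arbitrary $f\in\woo(G,X)$, for which the sums defining $\sigma$ are finite.

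First I would record two auxiliary facts. Since $\cale$ is a $\gcx$ module, so is $\cale^*$ by the earlier lemma; in particular the representation of $C(X)$ on $\cale^*$ is $G$-equivariant, that is, $g(t\alpha) = (g\cdot t)(g\alpha)$ for all $t\in C(X)$, $\alpha\in\cale^*$ and $g\in G$ (checked directly from $\langle f\alpha,v\rangle = \langle\alpha,fv\rangle$ and $\langle g\alpha,v\rangle = \langle\alpha,g^{-1}v\rangle$, exactly as for $V$ earlier in the paper). Second, for $f\in\wo(G,X)$ one has $(g^{-1}f)_h = g^{-1}\cdot f_{gh}$, the action being that of $C(X)$ on itself, which is immediate from the formula $(gf)_h(x) = f_{g^{-1}h}(g^{-1}x)$.

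The main computation then proceeds as follows. Using the definition of the dual action on $\wo(G,X)^*$, the definition of $\sigma$, and the second fact above,
\[
(g\cdot\sigma_{\tau,v})(f) = \sigma_{\tau,v}(g^{-1}f) = \Big\langle \sum_{h\in G}(g^{-1}f)_h\,\tau_h,\; v\Big\rangle = \Big\langle \sum_{h\in G}(g^{-1}\cdot f_{gh})\,\tau_h,\; v\Big\rangle = \Big\langle \sum_{k\in G}(g^{-1}\cdot f_k)\,\tau_{g^{-1}k},\; v\Big\rangle,
\]
the last equality by the substitution $k=gh$. On the other hand, applying the equivariance of the $C(X)$-action on $\cale^*$ in the form $f_k\cdot g(\tau_{g^{-1}k}) = g\big((g^{-1}\cdot f_k)\,\tau_{g^{-1}k}\big)$ together with $G$-linearity of the (finite) sum,
\[
\sigma_{g\bullet\tau,\,gv}(f) = \Big\langle \sum_{k\in G} f_k\,(g\bullet\tau)_k,\; gv\Big\rangle = \Big\langle \sum_{k\in G} f_k\, g(\tau_{g^{-1}k}),\; gv\Big\rangle = \Big\langle g\Big(\sum_{k\in G}(g^{-1}\cdot f_k)\,\tau_{g^{-1}k}\Big),\; gv\Big\rangle.
\]
Since $\langle g\alpha, gv\rangle = \langle\alpha, v\rangle$, the right-hand side equals $\big\langle \sum_{k\in G}(g^{-1}\cdot f_k)\,\tau_{g^{-1}k},\; v\big\rangle$, which is precisely the expression obtained above for $(g\cdot\sigma_{\tau,v})(f)$. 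Hence the two functionals agree on $\woo(G,X)$, and therefore on $\wo(G,X)$ by density.

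The only point requiring care — and the step most prone to error — is the simultaneous bookkeeping of the three actions in play: the $C(X)$-action inside the sum $\sum_h f_h\tau_h$ (with $f_h\in C(X)$ acting on $\tau_h\in\cale^*$), the $G$-action on $\cale^*$, and their compatibility $g(t\alpha)=(g\cdot t)(g\alpha)$, together with the index shift $k=gh$ and the identity $(g^{-1}f)_h = g^{-1}\cdot f_{gh}$. There is no analytic content beyond the continuity of $\sigma_{\tau,v}$ already established.
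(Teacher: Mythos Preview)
Your proof is correct and follows essentially the same route as the paper's: both verify $\sigma_{g\bullet\tau,gv}(f)=(g\cdot\sigma_{\tau,v})(f)$ by unwinding the definitions, using the equivariance identity $g(t\alpha)=(g\cdot t)(g\alpha)$ on $\cale^*$ and the formula $(g^{-1}f)_h=g^{-1}\cdot f_{gh}$, together with an index shift. The only cosmetic difference is that you compute the two sides separately and match them, whereas the paper writes a single chain of equalities; your explicit density remark is a welcome clarification the paper leaves implicit.
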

 \begin{proof}
 \[
 \begin{split}
 \sigma_{g\bullet\tau,gv}(f) & = \left\langle\sum_hf_h g(\tau_{g^{-1}h}),gv \right\rangle = 
 \left \langle g \sum_h(g^{-1}\cdot f_h)\tau_{g^{-1}h} ,gv \right\rangle \\
 & =   \left\langle \sum_h(g^{-1}\cdot f_h)\tau_{g^{-1}h},v \right\rangle = 
  \left \langle \sum_h(g^{-1}f)_{g^{-1}h}\tau_{g^{-1}h},v \right\rangle \\
  & = \sigma_{\tau,v}(g^{-1} f) = (g \sigma_{\tau,v})(f). 
  \end{split}
  \]
 
 \end{proof}
 \begin{defn}
Let $\cale$ be an $\ell^1$-geometric $\gcx$ module, and let $\mu\in \wo(G,X)^{**}$ be an invariant mean for the action. We define $\mu_\cale: \ell^\infty(G,\cale^*)\ra \cale^*$ by 
\[
 \langle\mu_\cale(\tau),v\rangle = \langle\mu,\sigma_{\tau,v}\rangle, 
\]
for every $\tau\in \ell^\infty(G, \cale^*)$, and $v\in \cale$. 
\end{defn}

\begin{lemma} \label{mu} Let $\cale$ be an $\ell^1$-geometric $\gcx$ module, and let $\mu\in \wo(G,X)^{**}$ be an invariant mean for the action. 
\begin{enumerate}
\item The map $\mu_\cale$ defined above is $G$-equivariant. 
\item If $\tau \in \ell^\infty(G, \cale^*)$ is constant then $\mu_\cale(\tau) = \tau_e$. 
\end{enumerate}
\end{lemma}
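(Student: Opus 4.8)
The plan is to verify the two claims directly from the definition $\langle \mu_\cale(\tau), v\rangle = \langle \mu, \sigma_{\tau,v}\rangle$, using the $G$-equivariance of the assignment $(\tau,v)\mapsto \sigma_{\tau,v}$ established in the previous lemma together with the invariance of the mean $\mu$.

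For part (1), I would compute $\langle \mu_\cale(g\bullet\tau), v\rangle$ for arbitrary $g\in G$ and $v\in \cale$. By definition this equals $\langle \mu, \sigma_{g\bullet\tau, v}\rangle$. The aim is to rewrite $\sigma_{g\bullet\tau,v}$ in terms of $\sigma_{\tau, g^{-1}v}$: from the equivariance lemma, $\sigma_{g\bullet\tau,\, g w} = g\sigma_{\tau,w}$, so taking $w = g^{-1}v$ gives $\sigma_{g\bullet\tau, v} = g\,\sigma_{\tau, g^{-1}v}$. Then $\langle \mu, \sigma_{g\bullet\tau,v}\rangle = \langle \mu, g\,\sigma_{\tau,g^{-1}v}\rangle = \langle \mu, \sigma_{\tau, g^{-1}v}\rangle$ by invariance of $\mu$, and this is $\langle \mu_\cale(\tau), g^{-1}v\rangle = \langle g\,\mu_\cale(\tau), v\rangle$. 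Since $v$ was arbitrary, $\mu_\cale(g\bullet\tau) = g\,\mu_\cale(\tau)$, which is the asserted $G$-equivariance.

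For part (2), suppose $\tau$ is the constant function with value $\tau_e = \alpha \in \cale^*$, i.e. $\tau_h = \alpha$ for all $h$. Then for $f\in\woo(G,X)$ we have $\sigma_{\tau,v}(f) = \langle \sum_h f_h \alpha, v\rangle = \langle \alpha, (\sum_h f_h) v\rangle = \langle \alpha, \pi(f)\, v\rangle = \pi(f)\,\langle\alpha,v\rangle$, using that $\sum_h f_h = \pi(f)1_X$ and that the $C(X)$-representation on $\cale$ is unital (so multiplication by the constant function $\pi(f)1_X$ is multiplication by the scalar $\pi(f)$). By continuity this identity $\sigma_{\tau,v} = \langle\alpha,v\rangle\,\pi$ extends from $\woo(G,X)$ to all of $\wo(G,X)$. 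Hence $\langle \mu_\cale(\tau), v\rangle = \langle\mu, \sigma_{\tau,v}\rangle = \langle\alpha,v\rangle\,\langle\mu,\pi\rangle = \langle\alpha,v\rangle$, using the normalisation $\mu(\pi)=1$. As $v$ is arbitrary, $\mu_\cale(\tau) = \alpha = \tau_e$.

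Neither step presents a serious obstacle; this lemma is essentially bookkeeping built on the equivariance lemma and the normalisation of $\mu$. The one point requiring a little care is the reduction $\sigma_{g\bullet\tau,v} = g\,\sigma_{\tau,g^{-1}v}$ in part (1): one must substitute the right vector into the bilinear equivariance identity and keep track of which action ($\bullet$ on $\ell^\infty(G,\cale^*)$, the $C(X)$-compatible action on $\cale$, and the dual action on $\wo(G,X)^*$) appears where. For part (2), the only thing worth stating explicitly is that unitality of the $C(X)$-module structure is what lets us pass from multiplication by $\pi(f)1_X$ to multiplication by the scalar $\pi(f)$, as remarked after the definition of \gcx-module.
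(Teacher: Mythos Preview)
Your proof is correct and follows the paper's argument essentially line for line: both parts unwind the definition $\langle\mu_\cale(\tau),v\rangle=\mu(\sigma_{\tau,v})$, invoke the equivariance identity $\sigma_{g\bullet\tau,v}=g\,\sigma_{\tau,g^{-1}v}$ together with $G$-invariance of $\mu$ for (1), and use $\sum_h f_h=\pi(f)1_X$ with unitality of the $C(X)$-action plus $\mu(\pi)=1$ for (2). Your explicit remark that the identity $\sigma_{\tau,v}=\langle\tau_e,v\rangle\pi$ passes from $\woo(G,X)$ to $\wo(G,X)$ by continuity is a nice touch the paper leaves implicit.
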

\begin{proof}
\[
\begin{split}
\langle\mu_\cale(g\bullet \tau),v\rangle & = \mu(\sigma_{g\bullet\tau,v}) = \mu(g\cdot \sigma_{\tau,g^{-1}v}) = \mu(\sigma_{\tau,g^{-1}v})\\
& = \langle\mu_\cale(\tau),g^{-1}v\rangle = \langle g\cdot (\mu_\cale(\tau)),v\rangle. 
\end{split}
\]

If $\tau$ is constant then 
\[
\begin{split}
\sigma_{\tau,v}(f)& = \left\langle\sum_h f_h\tau_h, v\right\rangle = \left\langle\left(\sum_h f_h\right)\tau_e,v\right\rangle
\\
& = \langle(\pi(f)1_X)\tau_e,v\rangle = \langle\pi(f)\tau_e,v\rangle = \langle\tau_e,v\rangle\pi(f) . 
\end{split}
\] 
So $\sigma_{\tau,v} = \langle\tau_e,v\rangle\pi$ and 
\[
\langle\mu_\cale(\tau),v\rangle = \mu(\sigma_{\tau,v})= \mu(\langle\tau_e,v\rangle\pi) = \langle\tau_e,v\rangle, 
\]
hence $\mu_\cale(\tau) = \tau_e$. 
\end{proof}

\section{Amenable actions and bounded cohomology}

Let 
$\cale$ be a Banach space equipped with an isometric action by $G$. 
Then we consider a cochain complex $C^{m}_b(G,\cale^*)$ which in degree $m$ consists of $G$-equivariant bounded cochains 
$\phi:G^{m+1}\ra \cale^*$ with values in the Banach dual $\cale^*$ of $\cale$ which is equipped with the natural differential $d$ as in the homogeneous bar resolution.  Bounded cohomology with coefficients 
in $\cale^*$ will be denoted by $H^*_b(G,\cale^*)$.  

\begin{defn}
Let $G$ be a countable discrete group acting by homeomorphisms on a compact Hausdorff topological space $X$. 
The function 
\[
J(g_0,g_1) = \delta_{g_1} - \delta_{g_0}
\]
is a bounded cochain of degree $1$ with values in $\noo(G,X)$, and in fact it is a bounded cocycle and so represents a class 
in $H^1_b(G,\no(G,X)^{**}$, where we regard $\noo(G,X)$ as a subspace of $\no(G,X)^{**}$. 
We call $[J]$ the Johnson class of the action.
\end{defn}

\begin{mainthmtwo}
Let $G$ be a countable discrete group acting by homeomorphisms on a compact Hausdorff topological space $X$. Then the following are equivalent
\begin{enumerate}
\item \label{amen} The action of $G$ on $X$ is topologically amenable.
\item \label{J} The class $[J]\in H_b^1(G, N_0(G,X)^{**})$ is trivial.
\item \label{vanish}   $H_b^p(G,\cale^*) = 0 $ for $p\geq 1$ and every $\ell^1$-geometric $\gcx$ module $\cale$. 
\end{enumerate}
\end{mainthmtwo}

 
\begin{proof}
We first show that (\ref{amen}) is equivalent to (\ref{J}). The short exact sequence of $G$-modules
\[
0\ra \no(G,X) \ra \wo(G,X) \xrightarrow{\pi} \R\ra 0
\]
leads, by taking double duals, to the short exact sequence
\[
0\ra \no(G,X)^{**}\ra \wo(G,X)^{**} \ra \R\ra 0
\]
which in turn gives rise to a long exact sequence in bounded cohomology
\[
H^0_b(G,\no(G,X)^{**}) \ra H^0_b(G,\wo(G,X)^{**}) \ra H^0_b(G,\R) \ra H^1_b(G,\no(G,X)^{**})\ra \dots
\]
The Johnson class $[J]$ is the image of the class $[1] \in H^0_b(G, \R)$ under the connecting homomorphism  $d: H^0_b(G,\R) \ra H^1(G,\no(G,X)^{**})$, and so $[J]=0$ if and only if $d[1] = 0$. By exactness of the cohomology sequence, this is equivalent to  $[1]\in \text {Im}\;\pi^{**}$, where 
$\pi^{**}: H^0_b(G,\wo(G,X)^{**}) \ra H^0_b(G,\R)$ is the map on cohomology induced by the summation map $\pi$.  Since $H^0_b(G,\wo(G,X)^{**})=\left(\wo(G,X)^{**}\right)^G$ and
$H_b^0(G,\R)=\R$ we have that $[J]=0$ if and only if
there exists an element $\mu\in \wo(G,X)^{**}$ such that $\mu=g\mu$ and $\mu(\pi)=1$.
Thus $\mu$ is an invariant mean for the action 
and the equivalence with amenability of the action 
follows from Theorem  A.

We turn to the implication (\ref{amen}) implies (\ref{vanish}). Since $G$ acts amenably on $X$ there is, by Theorem A,  an invariant mean $\mu$ associated with the action.
%
For every $h\in G$ and for every  equivariant bounded cochain $\phi$ we define
$s_h\phi: G^{p}\rightarrow \cale^*$ by
$s_h\phi(g_0, \dots, g_{p-1})= \phi(g,g_0, \dots, g_{p-1})$; we note that for fixed $h$,  $s_h\phi$ is not equivariant in general. However, the map $s_h$ does satisfy the identity $ds_h + s_hd = 1$ for every $h\in G$, and we will now
construct an equivariant contracting homotopy, adapting an averaging procedure introduced in \cite{BrodzkiNibloWright}. 

For $\phi\in C^p_b(G,\cale^*)$ let $\widehat{\phi}:G^p\rightarrow \ell^\infty(G, \cale^*)$ be defined by $\widehat{\phi}({\bf g})(h)=s_h\phi({\bf g})$, for ${\bf g}=(g_0, \ldots g_{p-1})$.



Note that  for every $k,h\in G$, 
\[
\begin{split}
{\widehat{\phi}(kg_0,\dots, kg_{p-1})}(h) & = \phi(h, kg_0,\dots, kg_{p-1}) = k (\phi(k^{-1}h, g_0, \dots, g_{p-1})) \\
&= k (\widehat{\phi}(g_0, \dots, g_{p-1})(k^{-1}h))\\
&= (k (\widehat{\phi}(g_0, \dots, g_{p-1})))(h)\\
\end{split}
\]
so  $\widehat{\phi}(k{\bf g})=k(\widehat{\phi}({\bf g}))$.

We can now define a map
$s:  C^p(G,\cale^*)\ra C^{p-1}(G,\cale^*)$:
\[
s\phi({\bf g}) = \mu_\cale(\widehat{\phi}({\bf g})), 
\]
where $\mu_\cale: \ell^\infty(G,\cale^*) \ra \cale^*$ is the map defined in Lemma \ref{mu} using the invariant mean  $\mu$. Note that $\|\mu_\cale\| \leq 2 \|\mu\|$, and $\|\widehat{\phi}({\bf g})\|\leq \sup\{\|\phi({\bf k})\|\mid {{\bf k}\in G^{p+1}}\}$. Hence $s\phi$ is bounded.

For every cochain $\phi$, $k(s\phi)=s(k\phi)=s\phi$ since $\widehat{\phi}$ and $\mu_\cale$ are equivariant.

The map $s$ provides a contracting homotopy for the complex $C^*_b(G,\cale^*)$ which can be seen as follows. 
As $\mu_\cale: \ell^\infty(G,\cale^*) \ra \cale^*$ is a linear operator it follows that for a given  $\phi\in C^p_b(G,\cale^*)$, and a 
$p+1$-tuple of arguments $\mathbf k = (k_0,\dots, k_{p})$, $ds\phi$ is obtained by applying the mean
$\mu_\cale$ to the map $g\mapsto ds_g\phi(\mathbf k)$, while $sd\phi$ is obtained by applying $\mu_\cale$ to the function 
$g\mapsto s_gd\phi (\mathbf k)$. Thus
\[
(sd + ds)\phi(\mathbf k) = \mu_\cale(g\mapsto (ds_g +s_gd)\phi(\mathbf k)). 
\]
Given that $ds_g +s_gd = 1$ for every $g\in G$, for every ${\bf g}\in G^{p+1}$  the function $g\mapsto (ds_g +s_gd)\phi(\mathbf k) = \phi(\mathbf k) \in \cale^*$ is constant, 
and so by Lemma \ref{mu}, 
\[
(sd + ds)\phi(\mathbf k) = (ds_e +s_ed)\phi(\mathbf k) = \phi(\mathbf k).
\]
Thus $sd + ds = 1$, as required. 


Collecting these results together, we have proved that (\ref{amen}) implies (\ref{vanish}). 


The fact that  (\ref{vanish}) implies (\ref{J}), follows from the fact that $\no(G,X)^{*}$ is an $\ell^1$-geometric \gcx-module, proved in Lemma \ref{lemma : N0 is ell-infty geometric}. 

\end{proof}

\end{document}